 \newtheorem{theorem}{Theorem}[section]
\newtheorem{lemma}[theorem]{Lemma}
\newtheorem{proposition}[theorem]{Proposition}
\theoremstyle{definition}
\theoremstyle{remark}
\newtheorem{remark}[theorem]{Remark}
\numberwithin{equation}{section}
 \newtheorem*{theorem*}{Theorem}
\numberwithin{equation}{section}
\numberwithin{equation}{section}
\begin{document}
\title[ Some Hadamard product inequalities for accretive matrices ]{ Some Hadamard product inequalities for accretive matrices }

 \author[A. Sheikhhosseini, S. Malekinejad and M. Khosravi]{Alemeh
Sheikhhosseini$^{1}$, Somayeh Malekinejad$^{2*}$ and Maryam Khosravi$^3$}
\address{ $^{1}$ Department of Pure Mathematics\\
Faculty of Mathematics and Computer, Shahid Bahonar University of Kerman,\\
	Kerman, Iran.}
\email{sheikhhosseini@uk.ac.ir;hosseini8560@gmail.com}
\address{ $^{2}$ Department of Mathematics\\
Payame Noor University, P.O. Box 19395-3697\\
Tehran, Iran.}
\email{maleki60313@pnu.ac.ir; maleki60313@gmail.com}
\address{ $^{3}$ Department of Pure Mathematics\\
Faculty of Mathematics and Computer, Shahid Bahonar University of Kerman,\\
	Kerman, Iran.}
\email{khosravi$_-$m @uk.ac.ir}

 \thanks{*Corresponding author}

 \subjclass[2010]{15A60, 15B48, 47A64}
\keywords{Sector matrices, Hadamard product, Positive linear map }
\setcounter{section}{0}
\numberwithin{equation}{section}
\begin{abstract}\noindent
In this paper, we obtain some new matrix inequalities involving Hadamard product. Also some Hadamard product inequalities for accretive matrices involving the matrix means, positive unital linear maps and matrix concave functions are investigated.
Among other results, it is shown that if $A, B, C, D$ are $n\times n$ positive definite matrices, then
\begin{equation*}
\left(\alpha A+\beta B\right)^r\circ\left(\alpha C+\beta D\right)^{1-r}\leq \alpha\left(A^r\circ C^{1-r}\right)+\beta\left(B^r\circ D^{1-r}\right),
\end{equation*}
where $r \in (-1, 0) \cup (1, 2)$ and $" \circ "$ stands for the Hadamard product.

 \end{abstract}

\maketitle

 \section{Introduction and preliminaries}%\vspace{.2cm} \noindent
%%%%%%%%%%%%%%%%%%%%%%%%%%%%%%
Let $\mathbb{M}_{m\times n}$ be the space of all $m\times n$ matrices with entries in the complex field $\mathbb{C}.$
If $m=n,$ then $\mathbb{M}_{n\times n}$ is denoted by $\mathbb{M}_n.$
A Hermitian matrix $A\in \mathbb{M}_n$ is called positive semidefinite, if
$x^*Ax\geq0$ for all $x\in \mathbb{C}^n$ (we write $A \geq 0$). If $A$ is invertible and positive semidefinite
matrix, then $A$ is called positive definite (we write $A > 0$). For Hermitian matrices $A, B \in \mathbb{M}_n,$ the inequality $A \geq B$\\
( or $B \leq A$) means that $A-B \geq 0.$
In this paper the set of all positive definite matrices is denoted by $\mathbb{M}_n^+$.\\
For $A\in \mathbb{M}_n,$ the decomposition $A= \Re A + i \Im A$
is called the Cartesian (or Toeplitz) decomposition of $A,$ where $\Re A=\frac{A+A^*}{2}$ and
$\Im A=\frac{A-A^*}{2i}$ are called the real part and imaginary part of $A,$ respectively. \\
Recall that the numerical rang or the field of values of $A\in \mathbb{M}_n$ is defined by
\begin{equation*}
W(A)=\{x^*Ax:x\in\mathbb{C} ^n,x^*x=1\}.
\end{equation*}
It is compact and convex set in $\mathbb{C}$ and contains the spectrum of $A;$ for more information see \cite{Hor}.\\
We say $A \in \mathbb{M}_n$ is accretive, if $\Re A > 0.$ When researching about accretive matrices, it is required to study sectorial type. For this purpose,
let $\mathcal{S}_{\theta}$ denote the sector region in the complex plane $\mathbb{C}$ as follows:
\begin{equation*}
\mathcal{S}_{\theta}=\{z\in \mathbb{C}:\Re z>0, \vert \Im z \vert\leq(\Re z)\tan\theta\},
\end{equation*}
where $0 \leq \theta <\frac{\pi}{2}.$
A matrix $A\in{ \mathbb{M}_n}$ is called a sector matrix, if $W(A)\subset \mathcal{S}_{\theta}$ for some $0 \leq \theta <\frac{\pi}{2}.$
We simply write $A \in \mathcal{S}_{\theta},$ where $W(A)\subset \mathcal{S}_{\theta}.$
Clearly, $A$ is positive definite if and only if $W(A)\subset \mathcal{S}_0$.
We refer the reader to \cite{An, bed1, bed2, pec, Rai, mal} as a sample of articles treating this topic.\\
Recall that the Kronecker(or tensor product) of $A=[a_{ij}] \in \mathbb{M}_{m\times n}$ and $B=[b_{ij}] \in \mathbb{M}_{p\times q}$ denoted by $A \otimes B$ is defined as the following block matrix in $\mathbb{M}_{mp\times nq}$
\[A \otimes B=[ a_{ij}B]=
\begin{bmatrix}
a_{11}B & a_{12}B&\cdots& a_{1n}B\\
a_{21}B & a_{22}B &\cdots& a_{2n}B\\
\vdots&\vdots & \ddots&\vdots\\
a_{m1}B & a_{m2}B &\cdots& a_{mn}B
\end{bmatrix}
.\]

 It is known that the cone of positive semidefinite matrices is closed under the Kronecker product, i.e., if $A \geq 0 $ and $B \geq 0,$ then $A \otimes B \geq 0.$\\
The Hadamard product (or Schur product) of $A, B \in \mathbb{M}_{m\times n} $ is the entrywise product of $A, B$
and denoted by $A \circ B.$\\
It is well known that, see \cite{pec}, if $\{e_j\}_{j=1}^n$ is an orthonormal basis of $\mathbb{C} ^n$, $V: \mathbb{C} ^n\rightarrow \mathbb{C} ^n\otimes \mathbb{C} ^n$ is the isometry $(V^{*}V=I)$ defined by $Ve_j=e_j\otimes e_j,$ then Hadamard product $A\circ B$ regarding to $\{e_j\}_{j=1}^n$ is expressed by
\begin{equation}\label{200}
A\circ B
=V^*(A\otimes B)V.
\end{equation}
By (\ref{200}), we can conclude that the Hadamard product $A \circ B$ is a principal submatrix of the Kronecker product $A \otimes B.$ In addition, $A \otimes B \geq 0$ if and only if $A \circ B \geq 0.$
Also, it can be easily seen that
$$ \Re (A\circ B)
=V^* \Re (A\otimes B)V.$$
Therefore, $\Re (A\otimes B)\geq0$ if and only if $\Re ( A\circ B) \geq0$.

 %%%%%%%%%%%%%%%%%
The Kronecker product has many useful and interesting properties (see, e.g.,\cite [Chapter 4]{Hor} ). One of the most important properties of the Kronecer product (the mixed-product property) is as follows:
\begin{equation}\label{306}
(A\otimes B)(C\otimes D)=AC\otimes BD,
\end{equation}
for matrices $A,B,C,D$ with appropriate sizes. Also, for any $A,B\in \mathbb{M}_n^+$
and for any real number $r,$
\begin{equation}\label{L2}
(A\otimes B)^r=A^r\otimes B^r.
\end{equation}
It is known that the set of $m\times n$ matrices become an abelian (commutative) group under the Hadamard product.\\
Suppose $\alpha,\beta\in {\mathbb{C}}$, and $A$ ,$B$ and $C$ are $m\times n$ matrices. Then $C\circ(A+B)=C\circ A+C\circ B$, and
\begin{equation*}
\alpha (A\circ B)=(\alpha A)\circ B= A\circ(\alpha B),
\end{equation*}
consequently
\begin{equation}\label{20}
\alpha A\circ\beta B=\alpha\beta( A\circ B).
\end{equation}
Recent developments on sector matrices can be found in \cite{pec, Raj, Dj}.\\
A linear map $\Phi:\mathbb{M}_n\rightarrow \mathbb{M}_n$ is called positive if
$\Phi(A)\geq0$ where $A\geq0$. In addition, $\Phi$ is said to be unital if $\Phi (I_{n})=I_{n},$ where $I_{n} \in \mathbb{M}_n$ is the identity matrix.\\
Let $J$ be an interval in $\mathbb{R}.$ A continuous function $f:J \longrightarrow \mathbb{R}$ is super multiplicative on $J,$ if $f(xy) \geq f(x)f(y)$ for every $x, y \in J.$ The function $f$ is said to be matrix monotone if $A \geq B$ with spectra are contained in $J,$ implies $f(A) \geq f(B).$ Also, $f$ is called matrix concave if
$$f(\lambda A+(1-\lambda) B) \geq \lambda f(A)+(1-\lambda) f(B)$$
for all $\lambda \in [0,1]$ and for every Hermitian matrices $A, B \in \mathbb{M}_{n} $ whose spectra are in the interval $J.$\\
Notice that if $f$ is a nonnegative continuous function on $[0, \infty),$ then $f$ is matrix monotone if and only if
$f$ is matrix concave, see \cite[Corollary 1.12]{pec}.
Also, we add the following notation
\begin{equation*}
\textit{\textbf{m}}=\lbrace f| f:(0,\infty)\rightarrow(0, \infty)~\text{is an matrix monotone function with}f(1)=1\rbrace.
\end{equation*}
A map $\Phi : \mathbb{M}_n \times \mathbb{M}_n \longrightarrow \mathbb{M}_m$ is said to be jointly convex if
$$\Phi (\lambda A+(1-\lambda)B, \lambda C+(1-\lambda)D ) \leq \lambda \Phi (A, C)+ (1-\lambda) \Phi (B, D)$$
for all $\lambda \in [0, 1]$ and for every $A, B \in \mathbb{M}_n.$\\
A matrix mean $\sigma$ on $\mathbb{M}_n^+$ is a binary operation that satisfying the following conditions:\\
$(i)\, A\leq C$ and $B \leq D$ imply $A\sigma B \leq C\sigma D, $\\
$(ii) \, C^{*}(A\sigma B ) C=(C^{*}AC) \sigma (C^{*} BC),$ for every $C \in \mathbb{M}_n, $\\
$(iii) \, A_{n}\downarrow A$ and $B_{n}\downarrow B$ imply $A_{n}\sigma B_{n}\downarrow A\sigma B, $\\
$(iv) \, I_{n}\sigma I_{n}=I_{n}.$\\
For example, for $A, B \in \mathbb{M}_n^{+},$ the matrix weighted arithmetic, geometric and harmonic means are defined, respectively as follows:\\
$A\nabla _{\nu}B=(1-\nu)A+\nu B, \, A\sharp_{\nu}B= A^{\frac{1}{2}}( A^{-\frac{1}{2}} B A^{-\frac{1}{2}} )^{\nu}A^{\frac{1}{2}}, \, A!_{\nu}B=(A^{-1}\nabla _{\nu} B^{-1})^{-1},$
where $\nu \in [0,1]. $ When $\nu=\frac{1}{2},$ we remove the $\nu$ from the above notations
and for brevity, we write $\nabla, \sharp$ and $!.$\\
For two matrix means $\sigma_{1}$ and $\sigma_{2},$ we say that $\sigma_{1} \leq \sigma_{2} $ if $A\sigma_{1} B \leq A\sigma_{1} B$ for all $ A, B \in \mathbb{M}_n^+.$ In particular, $!_{\nu} \leq \sharp _{\nu} \leq \nabla_{\nu}.$
For matrix mean $\sigma,$ the adjoint of $\sigma$ is denoted by $\sigma^{*}$ and is defined by
$A \sigma^{*}B=(A^{-1}\sigma B^{-1})^{-1},$ for all $ A, B \in \mathbb{M}_n^+.$ Clearly, this is involutive, i.e. $(\sigma^{*})^{*}=\sigma.$\\
%%%%%%%%%%%%%%%%%
The definition of $A\sharp_{\nu}B$ will be extended to accretive matrices.
In fact, for accretive matrices $A, B \in \mathbb{M}$ and $\nu \in \mathbb{R},$
\begin{equation*}
A\sharp_{\nu}B= A^{\frac{1}{2}}( A^{-\frac{1}{2}} B A^{-\frac{1}{2}} )^{\nu}A^{\frac{1}{2}}.
\end{equation*}
For more information and details, we refer the reader to \cite{bed1, bed2, bed3}, as a sample of papers that studying this topic.
%%%%%%%%%%%%%%%

 \section{main results}
In this section, we discuss new inequalities for accretive matrices through nonstandard domains.\\
First of all, note that by distributivity of Hadamard multiplication over addition,
for $A,B\in\mathbb{M}_n$,
\begin{align*}
A\circ B&=(\Re A+i \Im A)\circ( \Re B+i \Im B)\\
&=(\Re A\circ \Re B- \Im A \circ \Im B)+i( \Re A\circ \Im B+\Im A\circ \Re B).
\end{align*}
Thus
\begin{equation}\label{de}
\Re (A\circ B)=\Re A\circ \Re B-\Im A\circ \Im B.
\end{equation}
Similarly,
\begin{equation*}
\Re (A\otimes B)=\Re A\otimes \Re B-\Im A\otimes \Im B.
\end{equation*}
It is known that if $A,B\in \mathbb{M}_n^+$, then the Hadamard product $A\circ B$ is also positive. More generally, $A_1\geq A_2\geq 0$ and $B_1\geq B_2\geq 0$ imply
\begin{equation}\label{mon}
A_1\circ B_1\geq A_2\circ B_2\geq 0.
\end{equation}

 Unfortunately, if $A$ and $B$ are accretive, the Hadamard product $A\circ B$ is not accretive, necessarily. For example, let $A=B=1+i$, so $A\circ B=2i$ which is failed to be accretive.

 From above, if $\Im A \circ \Im B \leq 0$, for instance $\Im A\geq0$ and $\Im B\leq0$, then $A\circ B$ is accretive. However, this is not a necessary condition.

 Since $\Re (A\circ B)$ is Hermitian, we can use it in the inequalities, even if it is not positive.
The following result follows immediately from $\eqref{de}$.
\begin{lemma}\label{le17}
Let $A,B\in \mathbb{M}_n$ be accretive matrices. Then
\begin{itemize}
\item If $ \Im A \circ \Im B \geq 0, $ then
\begin{equation}\label{0-17}
\Re (A\circ B) \leq \Re A \circ \Re B.
\end{equation}
\item If $ \Im A \circ \Im B \leq 0, $ then
\begin{equation}\label{1-17}
\Re A \circ \Re B \leq \Re (A\circ B).
\end{equation}
\end{itemize}
\end{lemma}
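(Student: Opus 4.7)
The plan is to observe that this lemma is an essentially immediate consequence of the decomposition identity \eqref{de}, which the paper already established by expanding the Cartesian decomposition through the bilinearity of the Hadamard product. So all the substantive content is already in hand; what remains is just a sign analysis.

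First I would restate \eqref{de} in the form
\begin{equation*}
\mathcal{R} A \circ \mathcal{R} B \;-\; \mathcal{R}(A\circ B) \;=\; \mathcal{I} A \circ \mathcal{I} B,
\end{equation*}
which makes both inequalities visible at a glance. In the first case, the hypothesis $\mathcal{I} A \circ \mathcal{I} B \geq 0$ says exactly that the left-hand side is positive semidefinite, which is \eqref{0-17}. In the second case, the hypothesis $\mathcal{I} A \circ \mathcal{I} B \leq 0$ says the left-hand side is negative semidefinite, which is \eqref{1-17}.

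There is essentially no obstacle; the only thing worth flagging to the reader is that although $\mathcal{R}(A\circ B)$ need not itself be positive semidefinite (as the $A=B=1+i$ example in the preceding paragraph shows), it is Hermitian, so the Löwner order comparison in \eqref{0-17} and \eqref{1-17} is well-defined. Accretivity of $A$ and $B$ is used only implicitly: it ensures $\mathcal{R} A, \mathcal{R} B \geq 0$ so that the Hadamard product $\mathcal{R} A \circ \mathcal{R} B$ on the right is itself positive semidefinite (via \eqref{mon}), which is what gives the inequalities their usual interpretation. No further machinery, such as Lemmas \ref{L2} or \ref{L1} or the sector estimates \eqref{302}, \eqref{29}, is needed here.
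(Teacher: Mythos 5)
Your proof is correct and matches the paper's, which simply states that the lemma follows immediately from the identity \eqref{de}; your rearrangement $\mathcal{R} A \circ \mathcal{R} B - \mathcal{R}(A\circ B) = \mathcal{I} A \circ \mathcal{I} B$ and sign analysis is exactly the intended argument.
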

In \cite[Theorem 2]{chan2}
Chan et al. have given the following inequality for $A,B,C,D\in \mathbb{M}_n^+$ and $\alpha,\beta,r,s>0$ such that $r+s=1$,
\begin{equation}\label{301}
\left(\alpha A+\beta B\right)^r\circ\left(\alpha C+\beta D\right)^s\geq \alpha\left(A^r\circ C^s\right)+\beta\left(B^r\circ D^s\right).
\end{equation}
Next theorem is an extension of inequality $\eqref{301}$,
to achieve our goal,
we will need the following lemma.
\begin{lemma}\cite{cho}
Let $A,B\in \mathcal{S} _{\theta}.$ If $t\in[0,1]$, then
\begin{equation}\label{302}
\cos^{2t}(\theta) \Re A^t\leq \Re^t A\leq \Re A^t
\end{equation}
and if $t\in[-1,0],$ then
\begin{equation}\label{29}
\Re A^t \leq \Re^tA\leq\cos^{2t}(\theta) \Re A^t.
\end{equation}
\end{lemma}

 %%%%%%%%%%%%%%%%%%
\begin{theorem}\label{t0}
Let $A,B,C,D\in \mathcal{S}_{\theta}$ such that $ \Im A^r \circ \Im C^s \geq 0$ and \\ $ \Im B^r \circ \Im D^s \geq 0,$ where $\alpha,\beta,r,s>0$ and $r+s=1.$ Then
\begin{equation*}
\Re \left( \alpha(A^r\circ C^s)+\beta(B^r\circ D^s) \right)
\leq \sec^2\theta \left(
\Re \left(\alpha A+\beta B\right)^r\circ \Re \left(\alpha C+\beta D\right)^s \right).
\end{equation*}
\end{theorem}
\begin{proof}
Since $r+s=1$ and $r,s>0$ so $r,s\in[0,1],$ it follows that
\begin{align*}
&\Re \left(\alpha A+\beta B\right)^r\circ \Re \left(\alpha C+\beta D\right)^s
\\
&\geq \Re^r\left(\alpha A+\beta B\right)\circ \Re^s\left(\alpha C+\beta D\right)&&(\text{by \eqref{302} and \eqref{mon}})\\
&=\left(\alpha\Re A+\beta \Re B\right)^r\circ\left(\alpha \Re C+\beta \Re D\right)^s\\
&\geq
\alpha\left(\Re^rA\circ \Re^s C\right)+\beta\left(\Re^rB\circ \Re^s D\right)&&(\text{by \eqref{301}})\\
&\geq\alpha\left((\cos^{2r}\theta\Re A^r)\circ(\cos^{2s}\theta\Re C^s)\right)\\
&+\beta\left( (\cos^{2r}\theta\Re B^r)\circ (\cos^{2s}\theta\Re D^s) \right)&&(\text{by \eqref{302}})\\
&=\cos^{2(r+s)}\theta \left( \alpha(\Re A^r\circ\Re C^s)+\beta(\Re B^r\circ \Re D^s) \right)&&(\text{by \eqref{20}})\\
&=\cos^{2}\theta \left( \alpha \left( \Re A^r\circ\Re C^s \right)+\beta \left( \Re B^r\circ \Re D^s \right) \right)\\
&\geq \cos^2\theta \left( \alpha \Re (A^r\circ C^s)+\beta \Re (B^r\circ D^s) \right)&&(\text{by \eqref{0-17}})\\
&=\cos^2\theta \left( \Re \left(\alpha(A^r\circ C^s)+\beta(B^r\circ D^s)\right) \right).
\end{align*}
So, the proof is complete.
\end{proof}
%%%%%%%%%%%%%%%%%
It is worth noting how inequality $\eqref{301}$ is reversed when $r\in(-1,0)\cup(1,2)$.
For this, we need the following lemma.
\begin{lemma}\cite{Raj}\label{L1}
If $A,B\in \mathbb{M}_n$ such that $A \geq 0,$ then the map

 $(A,B)\rightarrow BA^{-1}B$ is jointly convex.
\end{lemma}
\begin{proposition}\label{p1}
If $A,B,C,D\in \mathbb{M}_n^+$ and $\alpha,\beta>0,$ then
\begin{equation}\label{321}
\left(\alpha A+\beta B\right)^r\otimes\left(\alpha C+\beta D\right)^{1-r}\leq \alpha\left(A^r\otimes C^{1-r}\right)+\beta\left(B^r\otimes D^{1-r}\right),
\end{equation}
where $r\in(-1,0)\cup(1,2).$
\end{proposition}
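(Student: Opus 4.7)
The plan is to recast \eqref{321} as the statement that the map
$(A,C)\mapsto A^{r}\otimes C^{\,1-r}$ is jointly convex on
$\mathbb{M}_{n}^{+}\times\mathbb{M}_{n}^{+}$ for $r\in(-1,0)\cup(1,2)$,
and to prove this convexity by lifting to $\mathbb{M}_{n^{2}}$ so that
Lemma~\ref{L1} becomes applicable. Since the flip
$X\otimes Y\cong Y\otimes X$ composed with the substitution
$r\leftrightarrow 1-r$ sends \eqref{321} into its twin, I may assume
without loss of generality that $r\in(1,2)$; the case $r\in(-1,0)$ then
follows automatically.

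I would first treat the boundary case $r=2$ (so $1-r=-1$) as the model
application of Lemma~\ref{L1}. Set $X_{1}=I_{n}\otimes C$,
$X_{2}=I_{n}\otimes D$, $Y_{1}=A\otimes I_{n}$, $Y_{2}=B\otimes I_{n}$
in $\mathbb{M}_{n^{2}}^{+}$. The mixed-product identity \eqref{306}
together with Lemma~\ref{L2} yields
$Y_{1}X_{1}^{-1}Y_{1}=A^{2}\otimes C^{-1}$,
$Y_{2}X_{2}^{-1}Y_{2}=B^{2}\otimes D^{-1}$, and, because
$\alpha X_{1}+\beta X_{2}=I_{n}\otimes(\alpha C+\beta D)$ and
$\alpha Y_{1}+\beta Y_{2}=(\alpha A+\beta B)\otimes I_{n}$,
$(\alpha Y_{1}+\beta Y_{2})(\alpha X_{1}+\beta X_{2})^{-1}(\alpha Y_{1}+\beta Y_{2})=(\alpha A+\beta B)^{2}\otimes(\alpha C+\beta D)^{-1}$.
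Applying Lemma~\ref{L1} at the convex coefficient
$\lambda=\alpha/(\alpha+\beta)$ and then using the positive homogeneity
$f(sX,sY)=sf(X,Y)$ of $f(X,Y)=YX^{-1}Y$ to rescale back to arbitrary
$\alpha,\beta>0$ delivers \eqref{321} for $r=2$ (and by the symmetry
reduction, also for $r=-1$).

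For a general $r\in(1,2)$, write $t=r-1\in(0,1)$; by Lemma~\ref{L2},
each term $A^{1+t}\otimes C^{-t}$ equals $P^{1+t}Q^{-t}$ for the
commuting pair $P=A\otimes I_{n}$, $Q=I_{n}\otimes C$ in
$\mathbb{M}_{n^{2}}^{+}$, with an analogous identification for the
$(B,D)$ pair and for the mixture. Thus \eqref{321} becomes the joint
convexity in $(P,Q)$ of the map $(P,Q)\mapsto P^{1+t}Q^{-t}$ for
$t\in(0,1)$. The main obstacle is precisely this final step:
Lemma~\ref{L1} by itself covers only the boundary $t=1$. To span the
whole open interval one may either invoke Ando's 1979 convexity
theorem on $A^{p}\otimes B^{q}$ for $p+q=1$ with $p\notin(0,1)$, or
derive the required convexity from Lemma~\ref{L1} directly via the
integral representation
$x^{-t}=(\sin t\pi/\pi)\int_{0}^{\infty}\lambda^{-t}(x+\lambda)^{-1}\,d\lambda$
combined with an analogous formula for $x^{1+t}=x\cdot x^{t}$, and
then averaging pointwise Lemma~\ref{L1} bounds against the positive
weight $\lambda^{-t}$.
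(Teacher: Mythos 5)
Your strategy is essentially the paper's: reduce to one of the two intervals by the $r\leftrightarrow 1-r$ symmetry, lift to $\mathbb{M}_{n^2}^+$ via \eqref{306} and Lemma~\ref{L2}, and extract joint convexity from Lemma~\ref{L1} through a Cauchy-type integral representation. The second of the two options you offer for the step you call the ``main obstacle'' is exactly how the paper closes it, so the gap you flag is fillable along the line you indicate. Concretely, the paper works on $r\in(-1,0)$ rather than $(1,2)$, factors $A^r\otimes B^{1-r}=(A\otimes B^{-1})^r(I\otimes B)$, inserts $x^r=\int_0^\infty (s+x)^{-1}\,d\mu(s)$, and pushes the factor $I\otimes B$ through the resolvent to obtain
\[
A^r\otimes B^{1-r}=\int_0^\infty (I\otimes B)\bigl(sI\otimes B+A\otimes I\bigr)^{-1}(I\otimes B)\,d\mu(s),
\]
in which every integrand has the form $YX^{-1}Y$ with $Y=I\otimes B$ and $X=sI\otimes B+A\otimes I$ both linear in $(A,B)$; Lemma~\ref{L1} then gives joint convexity of each integrand and hence of the integral, with no need for a separate representation of $x^{1+t}$ and no appeal to Ando's theorem. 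Your $r=2$ warm-up is a correct model computation but lies outside the stated open range and is not required; your final rescaling via $\epsilon=\alpha/(\alpha+\beta)$ and degree-one homogeneity (since $r+(1-r)=1$) matches the paper's last step.
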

\begin{proof}
First, we prove the assertion for $r\in(-1,0)$. For $r\in(1,2)$ it is sufficient to set $1-r$ instead of $r$, $A$ instead of $C$ and $B$ instead of $D$.\\
For $r\in(-1,0)$ by \cite[p.23 ]{Raj}, we have
\begin{align*}
x^r&=\int^{\infty}_0 (s+x)^{-1}d\mu(s),
\end{align*}
and therefore
\begin{align}\label{s}
(A\otimes B^{-1})^r=\int^{\infty}_0 (s+(A\otimes B^{-1}))^{-1}d\mu(s).
\end{align}
So, by $\eqref{306}$ and $\eqref{L2}$, we get
\begin{align*}
&A^r\otimes B^{1-r}=(A^rI)\otimes B^{-r}B)\\
&=(A^r\otimes B^{-r})(I\otimes B)
=(A\otimes B ^{-1})^r(I\otimes B)\\
&=\int^{\infty}_0(I\otimes B)
\left(sI\otimes I+(A\otimes B^{-1})\right)^{-1}
d\mu(s)&&(\text{by~~\eqref{s}})\\
&=\int^{\infty}_0(I\otimes B)
\left(sI\otimes I+(A\otimes B^{-1})\right)^{-1}
(I\otimes B)^{-1}(I\otimes B)d\mu(s)\\
&=\int^{\infty}_0
(I\otimes B)\left((sI\otimes I)(I\otimes B)+(A\otimes B^{-1})(I\otimes B)\right)^{-1}
(I\otimes B)d\mu(s)\\
&=\int^{\infty}_0(I\otimes B)\left((sI\otimes B)+(A\otimes I)\right)^{-1}
(I\otimes B)d\mu(s).
\end{align*}
Since $(sI\otimes B)+(A\otimes I)$ is positive definite, by Lemma $\ref{L1},$ we have that $(I\otimes B)\left((sI\otimes B)+(A\otimes I)\right)^{-1}
(I\otimes B)$ is jointly convex. Hence, from the viewpoint of the Rieman integral, integrand is also jointly convex, and so is $A^r\otimes B^{1-r}$. This means that for any $A,B,C,D\in \mathbb{M}_n^+$ and scalar $0<\epsilon<1$,
\begin{align*}
\left(\epsilon A+(1-\epsilon)B\right)^r\otimes\left(\epsilon C+(1-\epsilon)D\right)^s\leq\epsilon(A^r\otimes C^s)+(1-\epsilon)(B^r\otimes D^s),
\end{align*}
for $s>0$ and $r+s=1$. Since $0<\alpha/(\alpha+\beta)<1$, by setting $\epsilon=\alpha/(\alpha+\beta)$, we get $\eqref{321}$.
\end{proof}
%%%%%%%%%%%%%%%%%
In the following theorem, we derive inequality $\eqref{321}$ for Hadamard product of positive definite matrices.
%%%%%%%%%%%%%%%%%
\begin{theorem}\label{t1}
Let $A,B,C,D\in \mathbb{M}_n^+$ and $\alpha,\beta>0.$ Then
\begin{equation}\label{308}
\left( \alpha A+\beta B \right)^r \circ \left( \alpha C+\beta D \right)^{1-r} \leq \alpha \left( A^r\circ C^{1-r} \right)+\beta \left( B^r\circ D^{1-r} \right),
\end{equation}
where $r\in(-1,0)\cup(1,2).$
\end{theorem}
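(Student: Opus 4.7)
The plan is to deduce the Hadamard version directly from the tensor-product version established in Proposition \ref{p1}, by compressing both sides through the isometry $V$ that appears in Definition \ref{200}. This is the natural route because Hadamard product is by construction the $V^*(\cdot)V$-compression of the Kronecker product, and the compression map is linear and monotone for the Löwner order.

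First, I would apply Proposition \ref{p1} with the same matrices $A,B,C,D\in \mathbb{M}_n^+$, scalars $\alpha,\beta>0$ and exponent $r\in(-1,0)\cup(1,2)$, to obtain
\begin{equation*}
\bigl(\alpha A+\beta B\bigr)^r\otimes\bigl(\alpha C+\beta D\bigr)^{1-r}\leq \alpha\bigl(A^r\otimes C^{1-r}\bigr)+\beta\bigl(B^r\otimes D^{1-r}\bigr).
\end{equation*}
Next, I would conjugate both sides by $V$, i.e.\ apply the map $X\mapsto V^*XV$. Since $V^*XV\leq V^*YV$ whenever $X\leq Y$ (because $w^*V^*(Y-X)Vw=(Vw)^*(Y-X)(Vw)\geq0$), the inequality is preserved. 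The linearity of $V^*(\cdot)V$ takes care of the right-hand side, giving $\alpha V^*(A^r\otimes C^{1-r})V+\beta V^*(B^r\otimes D^{1-r})V$, which by \eqref{200} equals $\alpha(A^r\circ C^{1-r})+\beta(B^r\circ D^{1-r})$. The left-hand side similarly collapses, via \eqref{200}, to $(\alpha A+\beta B)^r\circ(\alpha C+\beta D)^{1-r}$, yielding exactly \eqref{308}.

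I do not expect any serious obstacle. The only point to double check is that Proposition \ref{p1} was proved without any restriction beyond positive definiteness and the range of $r$, so its hypotheses are met verbatim here; and that the compression step is valid in the Löwner order, which is a one-line verification. No use of the sectorial lemmas \eqref{302}, \eqref{29} or of Lemma \ref{le17} is needed, since all four matrices are positive definite and $\mathcal{R}$ acts trivially. Thus the proof is essentially a three-line application of Proposition \ref{p1} together with the identity $A\circ B=V^*(A\otimes B)V$.
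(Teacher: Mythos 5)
Your proof is correct and rests on the same two ingredients as the paper's: Proposition \ref{p1} and the realization of the Hadamard product as a positive linear image of the Kronecker product. The only (cosmetic) difference is that you compress the already-stated inequality \eqref{321} through the explicit isometry $V$ of \eqref{200}, whereas the paper phrases the same step abstractly, composing the jointly convex map $(A,B)\mapsto A^r\otimes B^{1-r}$ with a unital positive linear map $\Psi$ satisfying $\Psi(A\otimes B)=A\circ B$ and then re-instantiating joint convexity; your version is, if anything, slightly more direct.
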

\begin{proof}
Define $\Phi:\mathbb{M}_n^+\times
\mathbb{M}_n^+\rightarrow\mathbb{M}^+_{n^2}$ by $\Phi(A,B)=A^r\otimes B^{1-r}$. Recall that the Hadamard product of matrices is the principal submatrix of the Kronecker product of matrices. Consequently, there exists a positive unital linear map $\Psi:
\mathbb{M}^+_{n^2}\rightarrow\mathbb{M}_n^+$ such that $\Psi(A\otimes B)=A\circ B$. Hence,
\begin{align*}
(\Psi\circ\Phi)(A,B)=\Psi(\Phi(A,B))
=\Psi(A^r\otimes B^{1-r})=A^r\circ B^{1-r}.
\end{align*}
Since by Proposition $\ref{p1}$, $\Phi$ is jointly convex and $\Psi$ is positive and linear, the composition $\Psi\circ \Phi$ is also jointly convex. This means that for any $A,B,C,D\in\mathbb{M}_n^+$ and any scalar $0<\epsilon<1$,
\begin{align*}
\left(\epsilon A+(1-\epsilon)B\right)^r\circ\left(\epsilon C+(1-\epsilon)D\right)^{1-r}\leq\epsilon(A^r\circ C^{1-r})+(1-\epsilon)(B^r\circ D^{1-r}).
\end{align*}
Since $0<\alpha/(\alpha+\beta)<1$, by setting $\epsilon=\alpha/(\alpha+\beta)$, we get $\eqref{308}$.
\end{proof}
%%%%%%%%%%%%%%%%%%%%%

 %%%%%%%%%%%%%%%
\begin{lemma}\cite[Theorem 3.1 ]{bed3}
Let $A\in\mathbb{M}_n$ be accretive and let $r\in (1,2)$. Then
\begin{equation}\label{3.1}
\Re A^r\leq \Re ^rA.
\end{equation}
\end{lemma}

In \cite[Theorem 3.3 ]{bed3}, Bedrani et al. have given the following inequality for $A\in \mathcal{S}_{\theta}$ and $B>0$:
\begin{equation}\label{3.3}
\Re A\sharp_r\Re B\leq \sec\theta\Re (A\sharp_r B),
\end{equation}
where $r\in(1,2)$.
Also
in \cite[Theorem 3.9 ]{bed3}, they have given the following inequality for $B\in \mathcal{S}_{\theta}$ and $A>0$:
\begin{equation}\label{3.9}
\Re A\sharp_r\Re B\leq \sec\theta\Re (A\sharp_r B),
\end{equation}
where $r\in(-1,0)$.\\
Now we are ready to present the reversed version of $\eqref{3.1}.$
%%%%%%%%%%%%%%%
\begin{lemma}\cite[Theorems 3.2 and 3.8 ]{bed3}
Let $A,B\in \mathbb{M} _n$ be accretive and $r\in (-1,0)\cup(1,2)$. Then
\begin{equation}\label{L}
\Re (A\sharp_r B)\leq \Re A\sharp_r \Re B.
\end{equation}
\end{lemma}
%%%%%%%%%%%%%%%
\begin{lemma}\label{L0}
Let $A,B\in \mathcal{S}_{\theta}$. Then for any $r\in (-1, 0) \cup (1, 2),$
\begin{equation*}
\Re A^r\leq \Re^rA\leq\sec\theta\Re A^r.
\end{equation*}
\begin{proof}
If $r\in(-1,0)$ by $\eqref{3.9}$ we have
$$\Re^rA=(I \sharp_r \Re A )\leq\sec\theta
\Re (I\sharp_rA)=sec\theta \Re A^r$$
and by $\eqref{L}$ we have
$$\Re A^r=\Re (I\sharp_rA)\leq(I\sharp_r\Re A)=\Re^rA.
$$
In a similar way, it is proved for $r \in (1, 2),$ by \eqref{3.3} and \eqref{L}.
\end{proof}
\end{lemma}
%%%%%%%%%%%%%%%%%%
Next theorem is an extension of $\eqref{308}.$
\begin{theorem}
If $A,B,C,D\in \mathcal{S}_{\theta}$ such that $\Im A^r \circ \Im C^{1-r} \leq 0$ and $\Im B^r \circ \Im D^{1-r} \leq 0$, then
\begin{equation*}
\Re^r\left(\alpha A+\beta B\right)\circ \Re^{1-r}\left(\alpha C+\beta D\right)\leq \sec^2\theta\Re \left(\alpha\left(A^r\circ C^{1-r}\right)+\beta\left(B^r\circ D^{1-r}\right)\right),
\end{equation*}
where $ r\in(-1,0)\cup(1,2)$ and $\alpha,\beta>0.$
\end{theorem}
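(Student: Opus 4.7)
The plan is to mirror the proof of Theorem \ref{t0} with three substitutions dictated by the new hypotheses: replace the Chan--Kwong inequality \eqref{301} by its reverse \eqref{308} (this is exactly the range $r\in(-1,0)\cup(1,2)$ covered by Theorem \ref{t1}); replace the upper sector bound \eqref{302} by the matching lower bound (a consequence of Lemma \ref{0}); and replace part \eqref{0-17} of Lemma \ref{le17} by part \eqref{1-17}, which is available precisely because the hypotheses $\mathcal{I}A^{r}\circ\mathcal{I}C^{1-r}\leq 0$ and $\mathcal{I}B^{r}\circ\mathcal{I}D^{1-r}\leq 0$ have been assumed.

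First I would exploit linearity of $\mathcal{R}$ to rewrite the left-hand side of \eqref{305} as
$$(\alpha\mathcal{R}A+\beta\mathcal{R}B)^{r}\circ(\alpha\mathcal{R}C+\beta\mathcal{R}D)^{1-r},$$
where all of $\mathcal{R}A,\mathcal{R}B,\mathcal{R}C,\mathcal{R}D$ are positive definite because $A,B,C,D$ are sectorial. Since $r\in(-1,0)\cup(1,2)$, Theorem \ref{t1} applied to these four positive matrices yields
$$(\alpha\mathcal{R}A+\beta\mathcal{R}B)^{r}\circ(\alpha\mathcal{R}C+\beta\mathcal{R}D)^{1-r}\leq \alpha(\mathcal{R}^{r}A\circ\mathcal{R}^{1-r}C)+\beta(\mathcal{R}^{r}B\circ\mathcal{R}^{1-r}D).$$
Next, for each pair $(\mathcal{R}^{r}A,\mathcal{R}^{1-r}C)$ and $(\mathcal{R}^{r}B,\mathcal{R}^{1-r}D)$ I would invoke the sector estimates in the direction $\mathcal{R}^{t}X\leq\sec^{2t}\theta\,\mathcal{R}X^{t}$ (this is the content of \eqref{302}/\eqref{29} rearranged), so that the product of the two exponents gives a total factor $\sec^{2r}\theta\cdot\sec^{2(1-r)}\theta=\sec^{2}\theta$. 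Combining these bounds through the Hadamard monotonicity \eqref{mon} converts the preceding upper bound into
$$\sec^{2}\theta\bigl(\alpha(\mathcal{R}A^{r}\circ\mathcal{R}C^{1-r})+\beta(\mathcal{R}B^{r}\circ\mathcal{R}D^{1-r})\bigr).$$

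Finally, I would apply Lemma \ref{le17}\eqref{1-17} (this is where the hypotheses on $\mathcal{I}A^{r}\circ\mathcal{I}C^{1-r}$ and $\mathcal{I}B^{r}\circ\mathcal{I}D^{1-r}$ enter) to get $\mathcal{R}A^{r}\circ\mathcal{R}C^{1-r}\leq\mathcal{R}(A^{r}\circ C^{1-r})$ and similarly for $B,D$; pulling $\mathcal{R}$ outside the convex combination by linearity then produces the target right-hand side of \eqref{305}.

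The main obstacle is bookkeeping around the sector bounds: Lemma \eqref{302} is stated for $t\in[0,1]$ and Lemma \eqref{29} for $t\in[-1,0]$, but the present $r\in(-1,0)\cup(1,2)$ forces one of $r$, $1-r$ to lie in $(1,2)$, which is outside both ranges. So the real work is to justify the bound $\mathcal{R}^{t}X\leq\sec^{2t}\theta\,\mathcal{R}X^{t}$ for $t\in(1,2)$; either by a direct extension of the sector lemma, or by factoring $X^{t}=X\cdot X^{t-1}$ with $t-1\in(0,1)$ and reducing to the covered range. Once this estimate is in hand, the assembly above (linearity $\to$ Theorem \ref{t1} $\to$ sector bounds $\to$ Lemma \ref{le17}\eqref{1-17} $\to$ linearity) is routine.
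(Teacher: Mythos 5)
Your proposal reproduces the paper's own proof essentially step for step: linearity of $\mathcal{R}$, then Theorem \ref{t1} applied to the positive real parts, then the sector power estimates, then \eqref{1-17} (using the hypotheses on $\mathcal{I}A^{r}\circ\mathcal{I}C^{1-r}$ and $\mathcal{I}B^{r}\circ\mathcal{I}D^{1-r}$), then linearity again; the paper also performs the same reduction to $r\in(-1,0)$ via the symmetry $r\mapsto 1-r$, $A\leftrightarrow C$, $B\leftrightarrow D$ that you describe. So the route is the intended one.

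However, the obstacle you flag at the end is a genuine gap, not mere bookkeeping, and you do not close it. After the reduction to $r\in(-1,0)$ the second exponent $1-r$ lies in $(1,2)$, where neither \eqref{302} nor \eqref{29} applies; it is worth noting that the paper's proof simply cites Lemma \ref{0} at this step, so it leaves the same hole. Your proposed repair via $X^{t}=X\cdot X^{t-1}$ does not work as stated, because $\mathcal{R}$ is not multiplicative, so the comparison of $\mathcal{R}(X^{t})$ with $(\mathcal{R}X)^{t}$ cannot be reduced to the exponent $t-1\in(0,1)$ that way; a genuine extension of the sector power inequality to $t\in(1,2)$, with an explicit constant, would have to be proved or quoted. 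There is also a sign slip in the estimate you invoke: for $t\in[-1,0]$, \eqref{29} gives $\mathcal{R}^{t}X\le\cos^{2t}\theta\,\mathcal{R}X^{t}=\sec^{-2t}\theta\,\mathcal{R}X^{t}$, not $\mathcal{R}^{t}X\le\sec^{2t}\theta\,\mathcal{R}X^{t}$ (the latter constant is $<1$ and, combined with the left inequality of \eqref{29}, would force an absurd conclusion). With the correct constant the factor contributed by the exponent $r$ is $\sec^{-2r}\theta=\sec^{2|r|}\theta$, so the clean telescoping $\sec^{2r}\theta\cdot\sec^{2(1-r)}\theta=\sec^{2}\theta$ you rely on must be re-derived once the constant for the $(1,2)$ range is actually in hand; the paper's use of $\sec\theta$ for each factor is likewise not what Lemma \ref{0} yields.
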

\begin{proof}
We prove, the assertion for $r\in(-1,0)$. In order to prove it for $r\in(1,2)$, it is
sufficient to put $1-r$ instead of $r$, $A$ instead of $C$ and $B$ instead of $D$.\\
By $\eqref{mon}$, we have
\begin{align*}
&\Re^r\left(\alpha A+\beta B\right)\circ\Re^{1-r}\left(\alpha C+\beta D\right)\\
&=\left(\alpha \Re A+\beta \Re B\right)^r\circ\left(\alpha \Re C+\beta \Re D\right)^{1-r}\\
&\leq
\alpha\left(\Re^rA\circ \Re^{1-r}C\right)+\beta\left(\Re^rB\circ \Re^{1-r}D\right)&&(\text{by~~Theorem ~\ref{t1}})\\
&\leq\alpha\left((\sec\theta \Re A^r)\circ(\sec\theta \Re C^{1-r})\right)\\
&+\beta\left((\sec\theta\Re B^r)\circ(\sec\theta\Re D^{1-r})\right)&&(\text{by~~Lemma \ref{L0} and \eqref{mon}}) \\
&=\sec^{2}\theta\left(\alpha(\Re A^r\circ\Re C^{1-r})+\beta(\Re B^r\circ\Re D^{1-r})\right)&&(\text{by \eqref{20}})
\\
&\leq\sec^{2}\theta(\alpha\Re \left( A^r\circ C^{1-r}\right)+\beta\Re \left( B^r\circ D^{1-r})\right)&&(\text{by \eqref{1-17}})\\
&=\sec^{2}\theta\left(\Re \left(\alpha(A^r\circ C^{1-r})+\beta(B^r\circ D^{1-r}\right)\right).
\end{align*}
So, the proof is complete.
\end{proof}
%%%%%%%%%%%%%%%%%
%%%%%%%%%%%%%%%%%%%
\section{ Matrix monotone functions and matrix means}
In this section, we discuss some new inequalities for accretive matrices. In particular, we provide accretive versions of many known results for positive definite matrices.\\
We notice that if $\Phi$ is a positive unital linear map and $A \in \mathbb{M}_n,$ then
\begin{equation*}
\Phi(\Re A)=\Re (\Phi(A)),
\end{equation*}
and if $A,B \in \mathbb{M}_n^+,$ then
\begin{equation}\label{061}
\Phi(A\circ B)=\Phi(A)\circ\Phi(B).
\end{equation}
%%%%%%%%%%%%%%%%%%%%%%%%%%%
\begin{remark}
Similar to $\eqref{061}$, for matrices $A,B\in \mathcal{S}_{\theta}$, the equality
\begin{equation*}
\Re \Phi(A\circ B)=\Re \left(\Phi(A)\right)\circ\Re \left(\Phi(B)\right),
\end{equation*}
does not hold generally. For example, let \[ \Phi(X)=X,~
A=
\begin{bmatrix}
1-i & 1+i\\
-1+i & 1+i
\end{bmatrix},
B=
\begin{bmatrix}
1+i & 1+i\\
-1+i & 1-i
\end{bmatrix}
.\]
Then by a simple calculation, we obtain
\begin{equation*}
\Re \Phi(A\circ B)>\Re \left(\Phi(A)\right)\circ\Re \left(\Phi(B)\right).
\end{equation*}
\end{remark}
Now, we have an accretive version of \eqref{061} as follows:
\begin{proposition}
If $A,B\in \mathbb{M} _n$ are accretive such that $\Im A\circ \Im B\leq0$ and $\Phi$ is a positive unital linear map, then
\begin{equation}\label{62}
\Re \Phi(A\circ B)\geq\Re (\Phi( A))\circ\Re (\Phi( B)).
\end{equation}
\end{proposition}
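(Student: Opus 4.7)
The plan is to start from the Cartesian decomposition identity \eqref{de} applied to $A\circ B$, push $\Phi$ through it using linearity, and then use property \eqref{61} to move $\mathcal{R}$ across $\Phi$. Specifically, I would first write
\begin{equation*}
\mathcal{R}\Phi(A\circ B)=\Phi\bigl(\mathcal{R}(A\circ B)\bigr)=\Phi(\mathcal{R} A\circ \mathcal{R} B)-\Phi(\mathcal{I} A\circ \mathcal{I} B),
\end{equation*}
using \eqref{61} in the first equality and \eqref{de} plus linearity in the second.

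Next I would exploit the sectoriality hypothesis: since $A,B\in\mathcal{S}_\theta$, the Hermitian parts $\mathcal{R} A,\mathcal{R} B$ are positive definite, so property \eqref{061} applies to them, giving
\begin{equation*}
\Phi(\mathcal{R} A\circ \mathcal{R} B)=\Phi(\mathcal{R} A)\circ \Phi(\mathcal{R} B)=\mathcal{R}\Phi(A)\circ \mathcal{R}\Phi(B),
\end{equation*}
where the last equality uses \eqref{61} once more on each factor.

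For the remaining term, the assumption $\mathcal{I} A\circ \mathcal{I} B\leq 0$ combined with positivity of $\Phi$ yields $\Phi(\mathcal{I} A\circ \mathcal{I} B)\leq 0$, hence $-\Phi(\mathcal{I} A\circ \mathcal{I} B)\geq 0$. Putting the two pieces together gives
\begin{equation*}
\mathcal{R}\Phi(A\circ B)=\mathcal{R}\Phi(A)\circ \mathcal{R}\Phi(B)-\Phi(\mathcal{I} A\circ \mathcal{I} B)\geq \mathcal{R}\Phi(A)\circ \mathcal{R}\Phi(B),
\end{equation*}
which is the desired inequality \eqref{62}. There is no serious obstacle here; the only subtlety worth flagging is that \eqref{061} is being invoked only on the pair $(\mathcal{R} A,\mathcal{R} B)$, which is legitimate because sectoriality guarantees these are genuinely positive definite even though $A$ and $B$ themselves are merely accretive. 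The sign hypothesis on $\mathcal{I} A\circ\mathcal{I} B$ is precisely what is needed to turn the equality from \eqref{de} into an inequality after applying the positive map $\Phi$.
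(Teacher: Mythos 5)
Your proof is correct and follows essentially the same route as the paper: the paper applies \eqref{61}, then the inequality \eqref{1-17} (which is itself just \eqref{de} plus the sign hypothesis) inside the positive map $\Phi$, then \eqref{061} and \eqref{61} again. You merely inline Lemma \ref{le17} by carrying the term $-\Phi(\mathcal{I} A\circ\mathcal{I} B)$ explicitly before discarding it, which is the same argument in a slightly more expanded form.
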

\begin{proof}
Using \eqref{1-17},
\begin{align*}
\Re \Phi(A\circ B)&=\Phi(\Re (A\circ B))&&\\
&\geq\Phi(\Re A\circ\Re B)&&\\
&=\Phi(\Re A)\circ\Phi(\Re B)&& \\
&=\Re (\Phi( A))\circ\Re (\Phi( B)).&&
\end{align*}
So, the proof is complete.
\end{proof}
%%%%%%%%%%%%%%%%%%%%%%%
Now, by applying the following lemmas, we intend to present some Hadamard inequalities involving matrix monotone functions.
\begin{lemma}\cite{bed1}
If $ A \in \mathcal{S}_{\theta} $ and $f\in \textit{\textbf{m}}$, then
\begin{equation}\label{24}
f(\Re A)\leq\Re (f(A))\leq\sec^2\theta f(\Re A).
\end{equation}
\end{lemma}
%%%%%%%%%%%%%%%%%%%%%%%%%%
\begin{lemma}\cite{pec}
Let $A, B \geq 0 $ and $\Phi$ be a positive unital linear map. If $f$ is a super-multiplicative matrix concave function on $(0,\infty),$ then
\begin{equation}\label{39}
f \left(\Phi( A\circ B) \right) \geq \Phi \left( f(A)\circ f(B) \right).
\end{equation}
\end{lemma}
In the following theorem, inequality $(\ref{39})$ is extended.

 %%%%%%%%%%%%%%%%%%%%%%%%%%%

\begin{theorem}\label{t4}
Let $A,B\in \mathcal{S}_{\theta}$ such that $\Im A\circ \Im B\leq0$ and $\Phi$ be a positive unital linear map. If $f$ is a super-multiplicative matrix concave function on $(0,\infty)$, then
\begin{equation*}
\Re \left( \Phi(f(A)) \right) \circ \Re \left( \Phi(f(B)) \right) \leq \sec^4\theta \Re \left( f(\Phi( A\circ B) \right).
\end{equation*}
\end{theorem}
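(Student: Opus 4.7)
The plan is to chain the ingredients $\eqref{61}$, $\eqref{061}$, $\eqref{24}$, $\eqref{39}$, and the inequality $\eqref{1-17}$ of Lemma~\ref{le17} in such a way that the entire estimate is routed through positive semidefinite matrices, where classical Hadamard inequalities apply. First I would use $\eqref{61}$ twice to rewrite the left-hand side as $\Phi(\mathcal{R}(f(A)))\circ\Phi(\mathcal{R}(f(B)))$. Then the upper half of $\eqref{24}$ gives $\mathcal{R}(f(A))\le\sec^{2}\theta\,f(\mathcal{R}A)$ and analogously for $B$; applying the positive linear map $\Phi$, the monotonicity $\eqref{mon}$ of the Hadamard product on positive matrices, and $\eqref{20}$ to pull out the scalar produces
\begin{equation*}
\mathcal{R}(\Phi(f(A)))\circ\mathcal{R}(\Phi(f(B)))\le\sec^{4}\theta\,\bigl(\Phi(f(\mathcal{R}A))\circ\Phi(f(\mathcal{R}B))\bigr).
\end{equation*}

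Since $f(\mathcal{R}A)$ and $f(\mathcal{R}B)$ are positive definite, $\eqref{061}$ collapses the outer Hadamard product into $\Phi\bigl(f(\mathcal{R}A)\circ f(\mathcal{R}B)\bigr)$, and then $\eqref{39}$ applied to the positive pair $(\mathcal{R}A,\mathcal{R}B)$ majorises this by $f(\Phi(\mathcal{R}A\circ\mathcal{R}B))$. The hypothesis $\mathcal{I}A\circ\mathcal{I}B\le 0$ is precisely the trigger for $\eqref{1-17}$, giving $\mathcal{R}A\circ\mathcal{R}B\le\mathcal{R}(A\circ B)$; applying $\Phi$ and $\eqref{61}$ turns this into $\Phi(\mathcal{R}A\circ\mathcal{R}B)\le\mathcal{R}(\Phi(A\circ B))$. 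Because an operator concave function that is positive on $(0,\infty)$ is automatically operator monotone, applying $f$ preserves the direction of the inequality. A final application of the lower half of $\eqref{24}$ to $X=\Phi(A\circ B)$ replaces $f(\mathcal{R}X)$ by $\mathcal{R}(f(X))$, completing the chain.

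The step I expect to require the most care is this last invocation of $\eqref{24}$: that lemma is stated for matrices whose numerical range is contained in a sector $\mathcal{S}_{\theta}$, so one must check that $\Phi(A\circ B)$ is itself sectorial (with the same or a smaller opening angle). Accretivity is immediate, since $\mathcal{R}(\Phi(A\circ B))=\Phi(\mathcal{R}(A\circ B))\ge\Phi(\mathcal{R}A\circ\mathcal{R}B)\ge 0$ by the computation just performed, and sectoriality can be traced through $\eqref{200}$: one has $W(A\circ B)\subset W(A\otimes B)$, a tensor product of matrices from $\mathcal{S}_{\theta}$ is again sectorial, and the unital positive linear map $\Phi$ cannot enlarge the sector. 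Since only the lower inequality in $\eqref{24}$ is used at this last step, no extra scalar enters, and the final constant remains $\sec^{4}\theta$, matching the statement.
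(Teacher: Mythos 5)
Your proof follows essentially the same chain as the paper's: \eqref{61}, the upper half of \eqref{24} combined with \eqref{mon} and \eqref{20}, then \eqref{061}, \eqref{39}, \eqref{1-17}, and finally the lower half of \eqref{24} applied to $\Phi(A\circ B)$. One caveat on your closing remark: the tensor product of two matrices in $\mathcal{S}_{\theta}$ need \emph{not} lie in $\mathcal{S}_{\theta}$ --- the sector angle can double and even destroy accretivity (cf.\ the paper's example $A=B=1+i$, where $A\otimes B=2i$) --- but this is harmless here, since the hypothesis $\mathcal{I}A\circ\mathcal{I}B\leq 0$ already yields $\mathcal{R}(\Phi(A\circ B))=\Phi(\mathcal{R}(A\circ B))\geq\Phi(\mathcal{R}A\circ\mathcal{R}B)>0$, so $\Phi(A\circ B)$ is sectorial for \emph{some} angle and the $\theta$-independent lower inequality of \eqref{24} still applies without introducing any constant.
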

\begin{proof}
Applying \eqref{24} and \eqref{39}, we have
\begin{align*}
\cos^4\theta[\Re \left(\Phi(f(A))\right)&\circ\Re \left(\Phi(f(B))\right)]\\
&=\cos^2\theta\Re \left(\Phi(f(A))\right)\circ\cos^2\theta\Re \left(\Phi(f(B))\right)&&\\
&=\cos^2\theta\Phi\left(\Re (f(A))\right)\circ\cos^2\theta\Phi\left(\Re (f(B))\right)&&\\
&\leq\Phi\left(f(\Re (A))\right)\circ\Phi\left(f(\Re (B))\right)&&\\
&=\Phi\left(f(\Re (A))\circ f(\Re (B))\right)&&\\
&\leq f(\Phi(\Re A\circ\Re B))&& \\
&\leq f\left(\Phi(\Re (A\circ B))\right)&& \\
&= f\left(\Re (\Phi(A\circ B))\right)&&\\
&\leq \Re \left(f (\Phi(A\circ B))\right),
\end{align*}
completing the proof.
\end{proof}
%%%%%%%%%%%%%%%%%%%%%%%%%%
%%%%%%%%%%%%%%%%%%%%%%%%%%%
In studying matrix means, it is customary to compare between different means that arise from different matrix monotone function. In Theorem $\ref{K}$, we present such comparison for sectorial matrices.

 In \cite{kho}, the authors have proved the next result.
\begin{proposition}
Let $A,B\in {\mathcal S}_{\theta}$ such that $0<mI \leq \Re A, \Re B \leq MI$ for some scalars $m < M.$ If $\sigma$ is an arbitrary matrix mean and $\sigma_1,\sigma_2$ are two matrix means between $\sigma$ and $\sigma^*$, then
\begin{equation}\label{m1}
\cos^2\theta\Phi \Re (A\sigma_1 B)+mM\Phi^{-1}\Re (A\sigma_2B) \leq (M+m)I,
\end{equation}
for every positive unital linear map $\Phi$.
\end{proposition}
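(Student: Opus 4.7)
The plan is to reduce inequality \eqref{m1} for sector matrices to the analogous Kantorovich-type mean inequality applied to the positive definite matrices $\mathcal{R}A$ and $\mathcal{R}B$, which by hypothesis satisfy $mI\leq\mathcal{R}A,\mathcal{R}B\leq MI$. Two ingredients will be needed: first, sector-type comparisons between $\mathcal{R}(A\tau B)$ and $\mathcal{R}A\,\tau\,\mathcal{R}B$ for a mean $\tau$ lying between $\sigma$ and $\sigma^*$; second, the classical positive-matrix Kantorovich-type inequality
\begin{equation*}
\Phi(P\sigma_1 Q)+mM\Phi^{-1}(P\sigma_2 Q)\leq(M+m)I,
\end{equation*}
valid whenever $0<mI\leq P,Q\leq MI$ and $\sigma_1,\sigma_2$ are between $\sigma$ and $\sigma^*$ (this being the $\theta=0$ case of \eqref{m1}, already in the literature).

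For the first step I would invoke the sector-matrix relations
\begin{equation*}
\cos^2\theta\,\mathcal{R}(A\tau B)\leq\mathcal{R}A\,\tau\,\mathcal{R}B\leq\mathcal{R}(A\tau B),
\end{equation*}
for any mean $\tau$ between $\sigma$ and $\sigma^*$; these are two-variable analogues of the power inequality \eqref{302}. Applying the unital positive linear map $\Phi$ and using its monotonicity, with $\tau=\sigma_1$ and $\tau=\sigma_2$ respectively, yields
\begin{equation*}
\cos^2\theta\,\Phi\mathcal{R}(A\sigma_1 B)\leq\Phi(\mathcal{R}A\,\sigma_1\,\mathcal{R}B),\qquad \Phi(\mathcal{R}A\,\sigma_2\,\mathcal{R}B)\leq\Phi\mathcal{R}(A\sigma_2 B).
\end{equation*}
Taking the inverse of the second inequality (which reverses the order for positive definite operators) and multiplying by $mM>0$ gives the companion bound $mM\Phi^{-1}\mathcal{R}(A\sigma_2 B)\leq mM\Phi^{-1}(\mathcal{R}A\,\sigma_2\,\mathcal{R}B)$.

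For the second step I would sum the two bounds and apply the positive-definite Kantorovich mean inequality with $P=\mathcal{R}A$ and $Q=\mathcal{R}B$, concluding
\begin{equation*}
\cos^2\theta\,\Phi\mathcal{R}(A\sigma_1 B)+mM\Phi^{-1}\mathcal{R}(A\sigma_2 B)\leq\Phi(\mathcal{R}A\,\sigma_1\,\mathcal{R}B)+mM\Phi^{-1}(\mathcal{R}A\,\sigma_2\,\mathcal{R}B)\leq(M+m)I,
\end{equation*}
which is precisely \eqref{m1}.

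The main obstacle I expect is justifying the first step. The bound $\mathcal{R}A\,\tau\,\mathcal{R}B\leq\mathcal{R}(A\tau B)$ is not elementary and relies on $\tau$ being between the harmonic and arithmetic means (the extreme case $\tau=\#$ is the Bedrani--Kittaneh--Sababheh theorem), while the companion lower bound with the $\cos^2\theta$ factor is of similar flavour and can be established in parallel. A secondary technical point is ensuring $\Phi(\mathcal{R}A\,\sigma_2\,\mathcal{R}B)$ is invertible so that $\Phi^{-1}$ is defined; this is automatic because $\mathcal{R}A,\mathcal{R}B\geq mI>0$, operator means preserve positive definiteness, and $\Phi$ is unital and positive.
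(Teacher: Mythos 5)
The paper offers no proof of this proposition to compare against: it is imported verbatim from \cite{kho} (``In \cite{kho}, the authors have proved the next result''). Your argument is nonetheless sound and is almost certainly the intended one: the two-sided comparison $\cos^2\theta\,\mathcal{R}(A\tau B)\leq\mathcal{R}A\,\tau\,\mathcal{R}B\leq\mathcal{R}(A\tau B)$ is exactly the Bedrani--Kittaneh--Sababheh theorem from \cite{bed1} (valid for every Kubo--Ando mean via the integral representation through weighted harmonic means, so your worry about $\tau$ lying between $!$ and $\nabla$ is vacuous --- every operator mean does), and combining it with monotonicity of $\Phi$, antitonicity of inversion, and the positive-definite Kantorovich-type inequality for $\mathcal{R}A,\mathcal{R}B$ gives \eqref{m1} exactly as you describe. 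The only point you should make explicit is the precise source of the positive-definite case $\Phi(P\sigma_1 Q)+mM\Phi^{-1}(P\sigma_2 Q)\leq(M+m)I$, since the whole proof rests on it.
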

Recall that a norm $ \Vert . \Vert$ on $\mathbb{M}_n$ is saied to be unitarily invariant if $\Vert UAV \Vert= \Vert A \Vert$ for all $U, V, A \in \mathbb{M}_n$ where $U, V$ are unitary matrices.\\
Next, we have the following lemma for Hadamard product.
\begin{lemma}\cite{sin9}
If $A,B \in\mathbb{M}_n^+ $ and $t \in [0, 1],$ then
\begin{equation}\label{25}
2\|A^{\frac{1}{2}}\circ B^{\frac{1}{2}}\|\leq\|A^t\circ B^{1-t}+A^{1-t}\circ B^t\|\leq\|A+B\|,
\end{equation}
for all unitarily invariant norm $\|.\|.$
\end{lemma}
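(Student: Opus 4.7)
The plan is to lift scalar AM-GM and Young inequalities to operator inequalities on $\mathbb{C}^{n}\otimes\mathbb{C}^{n}$, and then push them back down to the Hadamard product by compressing with the isometry $V$ from \eqref{200}. The key observation is that $P:=A\otimes I$ and $Q:=I\otimes B$ are commuting positive definite operators, and by Lemma~\ref{L2} together with the mixed product property \eqref{306},
\[
A^{r}\otimes B^{s}=P^{r}Q^{s}\qquad\text{for all }r,s\in\mathbb{R}.
\]

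For scalars $p,q>0$ and $t\in[0,1]$, the AM-GM inequality and Young's inequality give
\[
2\sqrt{pq}\;\leq\; p^{t}q^{1-t}+p^{1-t}q^{t}\;\leq\; p+q.
\]
Because $P$ and $Q$ commute and are positive, joint functional calculus lifts this pointwise scalar estimate to the operator inequality
\[
2P^{1/2}Q^{1/2}\;\leq\; P^{t}Q^{1-t}+P^{1-t}Q^{t}\;\leq\; P+Q
\]
in $\mathbb{M}_{n^{2}}$.

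Next I would compress by $V$. Since $X\leq Y$ implies $V^{*}XV\leq V^{*}YV$, and a direct calculation with $Ve_{j}=e_{j}\otimes e_{j}$ gives
\[
V^{*}(A^{r}\otimes B^{s})V=A^{r}\circ B^{s},\qquad V^{*}(P+Q)V=\mathrm{diag}(A)+\mathrm{diag}(B)=\mathrm{diag}(A+B),
\]
the chain compresses to the positive operator inequality
\[
2\,(A^{1/2}\circ B^{1/2})\;\leq\; A^{t}\circ B^{1-t}+A^{1-t}\circ B^{t}\;\leq\;\mathrm{diag}(A+B).
\]
All three matrices are positive, so the same ordering passes to every unitarily invariant norm. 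The final step is the pinching inequality $\|\mathrm{diag}(A+B)\|\leq\|A+B\|$, which closes the chain at $\|A+B\|$.

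The main obstacle is spotting the right scaffolding, namely the commuting pair $A\otimes I,\ I\otimes B$, whose joint functional calculus reduces the whole problem to a pair of scalar inequalities. Once this framework is in place, the argument consists only of standard pieces: operator monotonicity of compressions for positive matrices, the isometry identity \eqref{200}, and the pinching inequality for unitarily invariant norms.
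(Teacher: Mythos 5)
Your argument is correct and complete. The paper itself gives no proof of this lemma---it is quoted from the reference [Matharu--Aujla], so there is nothing internal to compare against---but every step of your chain checks out: $P=A\otimes I$ and $Q=I\otimes B$ are commuting positive definite matrices with $P^rQ^s=A^r\otimes B^s$ by Lemma~\ref{L2} and \eqref{306}; simultaneous diagonalization turns the scalar bounds $2\sqrt{pq}\leq p^tq^{1-t}+p^{1-t}q^t\leq p+q$ into the operator inequalities $2P^{1/2}Q^{1/2}\leq P^tQ^{1-t}+P^{1-t}Q^t\leq P+Q$; compression by the isometry $V$ of \eqref{200} preserves the order and lands exactly on $2(A^{1/2}\circ B^{1/2})\leq A^t\circ B^{1-t}+A^{1-t}\circ B^t\leq\mathrm{diag}(A+B)$; since all three matrices are positive semidefinite, the operator order implies eigenvalue (hence singular value) domination and therefore the inequality in every unitarily invariant norm; and the pinching inequality $\|\mathrm{diag}(A+B)\|\leq\|A+B\|$ closes the upper bound. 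This tensor-lift-and-compress scheme is in fact the standard route to such Hadamard product norm inequalities, so your proof is essentially the expected one for the cited result.
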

Combining \eqref{m1}, and \eqref{25}, we get the following theorem. The proof is straightforward and omitted.
\begin{theorem}\label{K}
Let $A,B\in \mathcal{S}_{\theta}$ such that $0< mI\leq \Re A, \Re B\leq MI$ for some scalars $m < M$ and $\Im A\circ \Im B\leq0.$ If $\sigma_1$ and $\sigma_2$ are two matrix means between $\sigma,\sigma^*$ for some matrix mean $\sigma$,
then for every positive unital linear map $\Phi$, we get
\begin{equation*}
\| \Phi^{\frac{1}{2}}(\Re (A\sigma_1 B))\circ\Phi^{-\frac{1}{2}}(\Re (A\sigma_2 B))\|\leq \dfrac{M+m}{2\sqrt{mM}}\sec\theta.
\end{equation*}
\end{theorem}
To obtain a relationship between the inverse of Hadamard product of two matrices and Hadamard product of their reverses, we need the following lemma that has been stated in \cite[P. 204]{zha1}.
%%%%%%%%%%%%%%%%%%%%%%%%%%%
\begin{lemma}\cite{zha1}\label{lf}
Let $A\in\mathbb{M}_n^+$ and $X \in \mathbb{M}_{n\times m} $ such that $X^*X=I_m.$ Then
\begin{equation}\label{41-0}
(X^*AX)^{-1}\leq X^*A^{-1}X\leq\frac{(\lambda+\mu)^2}{4\lambda\mu}(X^*AX)^{-1},
\end{equation}
where $\lambda$ is the largest and $\mu$ is the smallest eigenvalue of $A$.

 Specially, if $A,~B \in \mathbb{M}_n^+ $ choosing appropriate X and replacing $A$ with $A\otimes B$, we get
\begin{equation}\label{41}
(A\circ B)^{-1}\leq A^{-1}\circ B^{-1}\leq\frac{(\lambda+\mu)^2}{4\lambda\mu}(A\circ B)^{-1},
\end{equation}
where $\lambda$ is the largest and $\mu$ is the smallest eigenvalue of $A\otimes B$.
\end{lemma}
%%%%%%%%%%%%%%%
\begin{remark}
It is easy to show that the function $k(h)=\dfrac{(1+h)^{2}}{4h},$ the well known Kantorovich function, is increasing on $[1, \infty).$ Hence, if \\ $ 0 < m\leq \mu \leq \lambda \leq M$ and we set $h_{1}=\frac{\lambda}{\mu}, \, h_{2}=\frac{M}{m},$
then $\frac{(\lambda+\mu)^2}{4\lambda\mu}=k(h_{1}) \leq k(h_{2})=\frac{(M+m)^2}{4M m}.$ Thus, in Lemma \ref{lf}, if $mI\leq A\leq MI$, we can replace coefficient$\frac{(\lambda+\mu)^2}{4\lambda\mu}$ with $\frac{(M+m)^2}{4M m}$.
\end{remark}
Next, we present the sectorial version of $\eqref{41}$.
%%%%%%%%%%%%%%%%%%
\begin{theorem}\label{t2}
Let $A,B\in \mathcal{S}_{\theta}$ such that $\Im A\circ \Im B\leq0$. Then
\begin{align*}
\cos^4\theta\Re (A\circ B)^{-1}\leq \Re A^{-1}\circ \Re B^{-1}.
\end{align*}
\end{theorem}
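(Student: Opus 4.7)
The plan is to chain three inequalities in order to pass from $\mathcal{R}(A\circ B)^{-1}$ to $\mathcal{R} A^{-1}\circ \mathcal{R} B^{-1}$, each step invoking one of the tools already established in the paper: Lemma \ref{le17} to move the real part across the Hadamard product, the positive definite Hadamard--inverse inequality \eqref{41}, and the sector transfer Lemma \ref{0} at the exponent $t=-1$.

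First I would verify that every object in sight is well defined and positive definite where needed. Since $A,B\in\mathcal{S}_{\theta}$, both $\mathcal{R} A$ and $\mathcal{R} B$ are positive definite, so by the Schur product theorem $\mathcal{R} A\circ\mathcal{R} B>0$. Combining \eqref{de} with the hypothesis $\mathcal{I} A\circ\mathcal{I} B\leq 0$ gives
\begin{equation*}
\mathcal{R}(A\circ B)=\mathcal{R} A\circ\mathcal{R} B-\mathcal{I} A\circ\mathcal{I} B\;\geq\;\mathcal{R} A\circ\mathcal{R} B\;>\;0,
\end{equation*}
so $\mathcal{R}(A\circ B)$ is invertible, and similarly $A$ and $B$ are themselves invertible since they are accretive.

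Next I would run the three-step chain. By \eqref{1-17}, $\mathcal{R} A\circ\mathcal{R} B\leq \mathcal{R}(A\circ B)$; both sides being positive definite, order-reversal of the inverse map yields
\begin{equation*}
\mathcal{R}(A\circ B)^{-1}\leq(\mathcal{R} A\circ\mathcal{R} B)^{-1}.
\end{equation*}
Applying \eqref{41} to the positive definite matrices $\mathcal{R} A$ and $\mathcal{R} B$ then gives
\begin{equation*}
(\mathcal{R} A\circ\mathcal{R} B)^{-1}\leq(\mathcal{R} A)^{-1}\circ(\mathcal{R} B)^{-1}.
\end{equation*}
Finally, Lemma \ref{0} with $t=-1$ produces the sector transfer $(\mathcal{R} A)^{-1}\leq\sec^{2}\theta\,\mathcal{R} A^{-1}$, and the analogous bound for $B$. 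Feeding these into the monotonicity property \eqref{mon} of the Hadamard product and using the scalar identity \eqref{20} then gives
\begin{equation*}
(\mathcal{R} A)^{-1}\circ(\mathcal{R} B)^{-1}\leq\sec^{4}\theta\,(\mathcal{R} A^{-1}\circ\mathcal{R} B^{-1}).
\end{equation*}
Chaining these three estimates and multiplying through by $\cos^{4}\theta$ delivers the claim.

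I do not expect any real obstacle beyond bookkeeping. The two points to watch are, first, confirming the positive definiteness of $\mathcal{R}(A\circ B)$ before inverting (which is precisely where the sign condition $\mathcal{I} A\circ\mathcal{I} B\leq 0$ is used), and, second, picking up exactly one factor of $\sec^{2}\theta$ per variable so that the final constant is $\sec^{4}\theta$ and not $\sec^{2}\theta$. The overall shape is the same recipe used throughout the paper: prove a positive definite statement for $\mathcal{R} A,\mathcal{R} B$ and then convert $(\mathcal{R} A)^{\pm 1}$ back into $\mathcal{R}(A^{\pm 1})$ via \eqref{302}/\eqref{29} at the cost of a $\sec^{2}\theta$ per conversion.
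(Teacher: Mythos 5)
Your proof is correct and follows essentially the same chain as the paper's: \eqref{1-17} plus order-reversal of the inverse, then \eqref{41} applied to $\mathcal{R}A$ and $\mathcal{R}B$, then Lemma \ref{0} at $t=-1$ combined with \eqref{mon} and \eqref{20} to pick up $\sec^{4}\theta$. The only difference is that the paper reads the left-hand side as $\mathcal{R}\bigl((A\circ B)^{-1}\bigr)$ and therefore prepends one further step, $\mathcal{R}\bigl((A\circ B)^{-1}\bigr)\leq\bigl(\mathcal{R}(A\circ B)\bigr)^{-1}$ via Lemma \ref{0} applied to the accretive matrix $A\circ B$, whereas you stop at $\bigl(\mathcal{R}(A\circ B)\bigr)^{-1}$; since your bound is the formally stronger one and that extra step is exactly the accretivity fact you already verified, nothing is lost.
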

\begin{proof}
By $(\ref{41})$, we have
\begin{align*}
\Re (A\circ B)^{-1}&\leq \Re^{-1}(A\circ B)&& \\
&\leq (\Re A\circ \Re B)^{-1}&&\\
&\leq \Re^{-1} A\circ \Re^{-1} B&&\\
&\leq \sec^2\theta \Re A^{-1}\circ \sec^2\theta \Re B^{-1}&& \\
&=\sec^4\theta(\Re A^{-1}\circ \Re B^{-1}).&&
\end{align*}
So, the proof is complete.
\end{proof}
%%%%%%%%%%%%%%%%%%%%%%%%%%
The following theorem is a reverse version of Theorem $\ref{t2}$.
\begin{theorem}\label{t3}
Let $A,B, A\circ B \in \mathcal{S}_{\theta}$ such that either
\begin{itemize}
\item[1)]
$\Im A^{-1}\circ \Im B^{-1}\leq0$ and $mI \leq \Re(A\otimes B) \leq 	MI $ for some scalars $0<m \leq M,$
\item[or]
\item[2)] $\Im A\circ \Im B\geq0$ and $ 	mI \leq \Re A\otimes \Re B \leq 	MI $ for some scalars $0 < m \leq M.$
\end{itemize}
Then
$$\Re A^{-1}\circ \Re B^{-1}\leq \sec^2\theta \frac{(M+m)^2}{4mM}\Re (A\circ B)^{-1}.$$
\end{theorem}
\begin{proof}
For the first condition, in view of \eqref{200} and \eqref{L2}, there exists an isometry $X$ of appropriate size such that $A^{-1} \circ B^{-1}=X^*(A\otimes B)^{-1}X,$ then using \eqref{1-17}, \eqref{29} and \eqref{41-0} respectively,\\
we have
\begin{align*}
\Re A^{-1}\circ \Re B^{-1}&\leq
\Re ( A^{-1}\circ B^{-1})=\Re (X^*(A\otimes B)^{-1}X)\\&=X^*\Re (A\otimes B)^{-1}X\\
&\leq X^*\Re^{-1}(A\otimes B)X\\
&\leq \frac{(M+m)^2}{4M m}(X^*\Re (A\otimes B)X)^{-1}\\
&= \frac{(M+m)^2}{4M m}\Re ^{-1}(X^*(A\otimes B)X)\\
&= \frac{(M+m)^2}{4M m}\Re^{-1}(A\circ B)\\
&\leq \sec^2\theta \frac{(M+m)^2}{4M m}\Re (A\circ B)^{-1}.
\end{align*}
To prove the result under the second condition, applying \eqref{29}, \eqref{41} and \eqref{0-17} sequentially, we get
\begin{align*}
\Re A^{-1}\circ \Re B^{-1}& \leq \Re^{-1} A\circ \Re^{-1} B &&\\
&\leq \frac{(M+m)^2}{4M m} (\Re A\circ\Re B)^{-1}&& \\
&\leq \frac{(M+m)^2}{4M m} \Re^{-1}( A\circ B)&&\\
&\leq \sec^2\theta \frac{(M+m)^2}{4M m}\Re (A\circ B)^{-1}.
\end{align*}
So, the result holds.
\end{proof}
%%%%%%%%%%%%%%%

 %%%%%%%%%%%%%%

\end{document}